\theoremstyle{plain}
\newtheorem{thm}{Theorem}[section]
\newtheorem{lem}{Lemma}[section]
\newtheorem{conj}{Conjecture}[section]
\theoremstyle{definition}
\newtheorem{df}{Definition}[section]
\newtheorem{rem}{Remark}[section]
\newcommand{\FF}{\mathbb{F}}
\newcommand{\ZZ}{\mathbb{Z}}
\newcommand{\CC}{\mathbb{C}}
\newcommand{\0}{\mathbf{0}}
\newcommand{\QQ}{\mathbb{Q}}
\newcommand{\C}{\mathbb{C}}
\DeclareMathOperator{\wt}{wt}
\begin{document}

\title{{
On Eisenstein polynomials and zeta polynomials II
\footnote{This work was supported by JSPS KAKENHI (18K03217, 17K05164).}
}
}

\author{
Tsuyoshi Miezaki
\thanks{Faculty of Education, University of the Ryukyus, Okinawa  
903-0213, Japan, 
miezaki@edu.u-ryukyu.ac.jp
(Corresponding author)
}
and
Manabu Oura
\thanks{Graduate School of Natural Science and Technology, 
Kanazawa University,  
Ishikawa 920-1192, Japan, 
oura@se.kanazawa-u.ac.jp,
Telephone: +81-76-264-5635, Fax: +81-76-264-6065 
}
}

\date{}
\maketitle

\begin{abstract}
Eisenstein polynomials, which were defined by 
the second author, are analogues of the 
concept of an Eisenstein series. 
The second author conjectured that there exist some analogous properties 
between Eisenstein series and Eisenstein polynomials.
In the previous paper, the first author 
provided new analogous properties of Eisenstein polynomials and 
zeta polynomials for the Type II case. 
In this paper, 
the analogous properties of Eisenstein polynomials and 
zeta polynomials are shown to also hold for the Type I, Type III, and Type IV cases. 
These properties are finite analogies of 
certain properties of Eisenstein series. 
\end{abstract}

{\small
\noindent
{\bfseries Key Words:}
Eisenstein polynomials, Zeta polynomials, Weight enumerators.\\ \vspace{-0.15in}

\noindent
2010 {\it Mathematics Subject Classification}. Primary 94B05;
Secondary 11T71, 11F11.\\ \quad
}


\section{Introduction}
In the present paper, 
we discuss some analogies between 
Eisenstein series, Eisenstein polynomials, and zeta polynomials. 
This paper is a sequel to the 
paper \cite{miezaki}, which we refer to in order  
to explain our results here. 
A linear code $C$ of length $n$ is a linear subspace of $\FF_{q}^{n}$. Then, the dual $C^{\perp}$ of a linear code $C$ is defined as follows: $C^{\perp}=\{ \bold{y}\in \FF_{q}^{n}\ | \ ( \bold{x},\bold{y}) =\0\ \mbox{ for all }\bold{x}\in C\}$. A linear code $C$ is called self-dual if $C=C^{\perp}$. The weight $\wt(\bold{x})$ is the number of its nonzero components. 
The weight enumerator of a code $C$ is
\begin{align*}
w_C(x, y)&=\sum_{\bold{u}\in C}x^{n-\wt(\bold{u})}y^{\wt(\bold{u})}=x^{n}+\sum_{i=1}^{n}A_{i}x^{n-i}y^{i}, 
\end{align*}
where $A_{i}$ is the number of codewords of weight $i$. 
In this paper, we consider the following self-dual codes~\cite{CS}: 
\begin{tabbing}
Type I: A code is defined over $\FF_{2}^{n}$ with all weights divisible by $2$,\\
Type II: A code is defined over $\FF_{2}^{n}$ with all weights divisible by $4$,\\
Type III: A code is defined over $\FF_{3}^{n}$ with all weights divisible by $3$,\\
Type IV: A code is defined over $\FF_{4}^{n}$ with all weights divisible by $2$. \
\end{tabbing}
For the detailed expression of codes, see \cite{{CS},{E}}. 
For Types $\mbox{I}$ to $\mbox{IV}$, it is well known that the weight enumerator $w_C(x, y)$ is in the ring of invariants $\C[f, g]$ \cite{CS}, where 
\begin{tabbing}
Type I : $f=x^2+y^2$, $g=x^2y^2(x^2-y^2)^2$, \\
Type II: $f=x^8+14x^4y^4+y^8$, $g=x^4y^4(x^4-y^4)^4$, \\
Type III: $f=x^4+8xy^3$, $g=y^3(x^3-y^3)^3$, \\
Type IV: $f=x^2+3y^2$, $g=y^2(x^2-y^2)^2$. 
\end{tabbing}
Let 
\begin{tabbing}
Type I: $G_{\rm{I}}=
\displaystyle\left\langle 
\frac{1}{\sqrt{2}}
\begin{pmatrix}
1&1\\
1&-1
\end{pmatrix}, 
\begin{pmatrix}
1&0\\
0&-1
\end{pmatrix}
\right\rangle$, \\
Type II: $G_{\rm{II}}=
\displaystyle\left\langle 
\frac{1}{\sqrt{2}}
\begin{pmatrix}
1&1\\
1&-1
\end{pmatrix}, 
\begin{pmatrix}
1&0\\
0&\sqrt{-1}
\end{pmatrix}
\right\rangle$, \\
Type III: $G_{\rm{III}}=
\displaystyle\left\langle 
\frac{1}{\sqrt{3}}
\begin{pmatrix}
1&2\\
1&-1
\end{pmatrix}, 
\begin{pmatrix}
1&0\\
0&e^{2\pi \sqrt{-1}/3}
\end{pmatrix}
\right\rangle$, \\
Type IV: $G_{\rm{IV}}=
\displaystyle\left\langle 
\frac{1}{2}
\begin{pmatrix}
1&3\\
1&-1
\end{pmatrix}, 
\begin{pmatrix}
1&0\\
0&-1
\end{pmatrix}
\right\rangle$. 
\end{tabbing}
It is known that 
for $\rm{X}\in\{\rm{I},\ldots,\mbox{IV}\}$, 
a weight enumerator of Type X codes 
is an invariant polynomial of the 
group $G_{\rm{X}}$, namely, for all $\sigma\in G_{\rm{X}}$, 
\[
w_C(\sigma(x,y))=w_C(x,y), 
\]
where $\sigma(x,y):=\sigma\, {}^t(x,y)$. 
We denote by $\CC[x,y]^{G_{\rm{X}}}$ the 
$G_{\rm{X}}$-invariant subring of $\CC[x,y]$.

Oura defined an
Eisenstein polynomial for Type II 
as
\[
\varphi_\ell^{G_{\rm{II}}}(x,y)
=\frac{1}{|G_{\rm{II}}|}\sum_{\sigma\in G_{\rm{II}}}(\sigma x)^\ell, 
\]
where for 
$
\sigma:=
\begin{pmatrix}
a&b\\
c&d
\end{pmatrix}, 
\sigma x:=ax+by
$ \cite{Oura1,Oura2}.
Here we define an
Eisenstein polynomial for Type X 
as follows: 
\[
\varphi_\ell^{G_{\rm{X}}}(x,y)
=\frac{1}{|G_{\rm{X}}|}\sum_{\sigma\in G_{\rm{X}}}(\sigma x)^\ell. 
\]
It is straightforward to show that the 
Eisenstein polynomial for Type X is in $\CC[x,y]^{G_{\rm{X}}}$. 

We next introduce an expression relating 
$G_{\rm{II}}$ and modular forms 
$M(\Gamma_1)$. For the detailed expression of modular forms, see \cite{{CS},{E},{F1},{F2},{K},{Kob}}. 
For $\CC[x,y]^{G_{\rm{X}}}$, 
we construct the elements of $\Gamma_1$ as follows: 
\begin{align*}
Th: \CC[x,y]^{G_{\rm{X}}} &\rightarrow M(\Gamma_1)\\
x&\mapsto f_{0}(\tau)=\sum_{b\in \ZZ, b\equiv {0}\pmod{2}}
\exp(\pi \sqrt{-1}\tau {b^2}/2),\\ 
y&\mapsto f_{1}(\tau)=\sum_{b\in \ZZ, b\equiv {1}\pmod{2}}
\exp(\pi \sqrt{-1}\tau {b^2}/2). 
\end{align*}
The map $Th$ is called the theta map. 

A typical example of 
$M(\Gamma_1)$
is the Eisenstein series for $\Gamma_1$, which is defined as follows: 
\[
\psi_k^{\Gamma_1}(\tau):=1-\frac{2k}{B_k}\sum_{n=1}^{\infty}\sigma_{k-1}(n)q^n, 
\]
where $B_k$ is the $k$-th Bernoulli number, 
$\sigma_{k-1}(n):=\sum_{d\mid n}d^{k-1}$, and $q=e^{2\pi i\tau}$. 
For the detailed expression of the Eisenstein series, 
see \cite{{CS},{F1},{F2},{K},{Kob}}. 

The elements of both $M(\Gamma_1)$ and 
$\CC[x,y]^{G_{\rm{X}}}$ are ``invariant functions" 
and the Eisenstein series 
and the Eisenstein polynomial are 
``average functions" of the groups. Therefore, 
these two objects are expected to have similar properties. 
Moreover, for $f \in \CC[x,y]^{G_{\rm{X}}}$, 
it is expected that $f$ and $Th(f)$ have similar properties. 
Table \ref{Tab:sum} shows a summary of the concepts 
that we have introduced so far. 
\begin{table}[thb]
\caption{Summary of our objects}
\label{Tab:sum}
\begin{center}
\begin{tabular}{c|c}
\noalign{\hrule height0.8pt}
$\Gamma_1$ & $G_{\rm{II}}$\\\hline
$M(\Gamma_1)$ & $\CC[x,y]^{G_{\rm{II}}}$\\\hline
Eisenstein series & Eisenstein polynomials\\\hline
$f$ & $Th(f)$\\
\noalign{\hrule height0.8pt}
\end{tabular}
\end{center}
\end{table}

For $\varphi_{\ell}^{G_{\rm{X}}}(x,y)\not\equiv 0$, 
we denote by $\widetilde{\varphi_{\ell}^{G_{\rm{X}}}}(x,y)$ 
the polynomial $\varphi_{\ell}^{G_{\rm{X}}}(x,y)$ divided by 
its $x^\ell$ coefficient.
We give some examples in Table \ref{Tab:sum2}. 
\begin{table}[thb]
\caption{Examples of Eisenstein polynomials}
\label{Tab:sum2}
\begin{center}
\begin{tabular}{c|c}
\noalign{\hrule height0.8pt}
$\ell$ & $\widetilde{\varphi_{\ell}^{G_{\rm{II}}}}(x,y)$\\\hline
$8$ & $x^8+14 x^4 y^4+y^8$\\\hline
$12$ & $x^{12}-33 x^8 y^4-33 x^4 y^8+y^{12}$\\
\noalign{\hrule height0.8pt}
\end{tabular}
\end{center}
\end{table}






In \cite{Ouratalk,Oura3}, several analogies between 
Eisenstein series and 
Eisenstein polynomials were reported. 
Suppose $p$ is a prime number and $v_p$ is the corresponding 
valuation for the field $\QQ$. 
Then $a\in \QQ$ is said to be $p$-integral if $v_p(a)\geq 0$. 
Eisenstein series have the following properties: 
\begin{enumerate}
\item [(1)]
All of the zeros of the Eisenstein series are 
on the circle 
$\{e^{\sqrt{-1}\theta}\mid \pi/2\leq \theta \leq 2\pi/3\}$ 
\cite{RS}. 
\item [(2)]
The zeros of the Eisenstein series $\psi_k^{\Gamma_1}(\tau)$ are the same as those for $\psi_{k+2}^{\Gamma_1}(\tau)$ 
\cite{Nozaki}. 
\item [(3)]
For odd prime $p$, where $p\geq 5$, 
the coefficients of the Eisenstein series $\psi_{p-1}^{\Gamma_1}(\tau)$ 
are $p$-integral 
\cite[P.~233, Theorem 3]{IR}, \cite{Kob}. 
\end{enumerate}
Oura's conjecture states that 
the analogous properties of (1), (2), and (3) also hold for 
$Th(\widetilde{\varphi_\ell^{G_{\rm{II}}}})$, given formally as the following conjecture. 

\begin{conj}[\cite{Ouratalk,Oura3}]\label{conj;Oura}
\begin{enumerate}
\item [{\rm (1)}]
All of the zeros of $Th(\widetilde{\varphi_\ell^{G_{\rm{II}}}})$ are 
on a segment of a circle
$\{e^{\sqrt{-1}\theta}\mid \pi/2\leq \theta \leq 2\pi/3\}$. 
\item [{\rm (2)}]
The zeros of $Th(\widetilde{\varphi_\ell^{G_{\rm{II}}}})$ are the same as 
those of $Th(\widetilde{\varphi_{\ell+8}^{G_{\rm{II}}}})$. 
\item [{\rm (3)}]
Let $p$ be an odd prime. 
Then the coefficients of $Th(\widetilde{\varphi_{2(p-1)}^{G_{\rm{II}}}})$ 
are $p$-integral. 
\end{enumerate}
\end{conj}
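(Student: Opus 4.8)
The plan is to reduce Conjecture~\ref{conj;Oura} to explicit statements about one concretely described family of modular forms, and then to handle its three parts by, respectively, a Rankin--Swinnerton-Dyer argument, a weight-raising recursion, and a von Staudt--Clausen congruence. The starting point is to make the Eisenstein polynomials explicit: since $\varphi_\ell^{G_{\rm{II}}}\in\CC[x,y]^{G_{\rm{II}}}=\CC[f,g]$, one writes it as an isobaric polynomial in $f$ and $g$ whose coefficients are obtained by matching finitely many monomial coefficients of the Reynolds average $\frac{1}{|G_{\rm{II}}|}\sum_{\sigma}(\sigma x)^{\ell}$, conveniently collected into a generating function in $\ell$. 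Applying $Th$ and using that $Th(f)$ is the weight-four Eisenstein series $\psi_4^{\Gamma_1}$ while $Th(g)$ is a scalar multiple of the weight-twelve cusp form, one obtains $Th(\varphi_\ell^{G_{\rm{II}}})$ as an explicit isobaric polynomial in these two forms of weight $\ell/2$. From this point on, the three assertions become questions about this one family, which is exactly the setting in which (1), (2), (3) are known for $\psi_k^{\Gamma_1}$.

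For part (3) I would run a $p$-adic estimate modeled on the proof that $\psi_{p-1}^{\Gamma_1}$ has $p$-integral coefficients. For $p\ge 5$ the prime $p$ does not divide $|G_{\rm{II}}|$, so dividing by $|G_{\rm{II}}|$ creates no $p$ in a denominator, and the entries of the generators of $G_{\rm{II}}$ involve only $\sqrt{2}$ and $\sqrt{-1}$, which occur in $(\sigma x)^{2(p-1)}$ to even order and hence contribute rationally and $p$-integrally; thus the coefficients of $\varphi_{2(p-1)}^{G_{\rm{II}}}$ are themselves $p$-integral. The substantive point is that the normalizing constant, the $x^{2(p-1)}$-coefficient of $\varphi_{2(p-1)}^{G_{\rm{II}}}$, is a $p$-adic unit; I would extract this from a Bernoulli-type closed form for that coefficient, paralleling the fact that the denominator of $B_{p-1}$ is divisible by $p$ to exactly the first power. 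Granting this, $Th(\widetilde{\varphi_{2(p-1)}^{G_{\rm{II}}}})$ has $p$-integral $q$-expansion because $Th(x)=f_0(\tau)$ and $Th(y)=f_1(\tau)$ do.

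For part (1) I would transcribe the Rankin--Swinnerton-Dyer method: restrict $Th(\widetilde{\varphi_\ell^{G_{\rm{II}}}})$ to the arc $\tau=e^{\sqrt{-1}\theta}$, $\pi/2\le\theta\le 2\pi/3$, where after removing the automorphy factor it becomes a real-valued trigonometric expression in $\theta$; split it into a dominant term coming from the power-of-$\psi_4^{\Gamma_1}$ part and a remainder divisible by the cusp form; show the dominant term already produces the maximal possible number of sign changes on the arc while the remainder is uniformly too small there to cancel any of them; then the valence formula for weight $\ell/2$ on $\Gamma_1$ forces every zero onto the arc. An alternative route, nearer to the code-theoretic core of the paper, is to pass to the zeta polynomial of $\widetilde{\varphi_\ell^{G_{\rm{II}}}}$, prove a Riemann-hypothesis statement placing its zeros on $|T|=1/\sqrt{2}$, and transport this to the arc. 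Part (2) rests on the identity $Th(\widetilde{\varphi_{\ell+8}^{G_{\rm{II}}}})=\psi_4^{\Gamma_1}\cdot Th(\widetilde{\varphi_\ell^{G_{\rm{II}}}})+(\text{a form divisible by the cusp form})$, valid because both normalized Eisenstein polynomials have leading ($f$-power) coefficient $1$; since the cusp form is nonvanishing on the upper half-plane and the correction is uniformly small on the arc by the estimate from part (1), the two sides have the same sign changes on the arc and hence the same zeros there, the only new one being the simple zero of $\psi_4^{\Gamma_1}$ at the endpoint $\tau=e^{2\pi\sqrt{-1}/3}$; this is precisely the phenomenon in Nozaki's theorem.

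The main obstacle is the uniform error control underlying parts (1) and (2): one needs bounds on the cusp-form component of $Th(\varphi_\ell^{G_{\rm{II}}})$ valid along the entire arc and uniformly in $\ell$, together with a tractable description of the cusp-form correction in the identity above. This is not a conceptual gap but exactly the delicate analytic step where the classical Rankin--Swinnerton-Dyer and Nozaki arguments require the most work, and the same should be expected here; part (3), by contrast, should come down to a finite $p$-adic verification once the valuation of the normalizing constant is identified.
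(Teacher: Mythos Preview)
The statement you are attempting to prove is explicitly recorded in the paper as a \emph{conjecture}. Only part~(3) is established---in the previous paper \cite{miezaki}, recorded here as Theorem~\ref{thm:main1}(III)---while parts~(1) and~(2) remain open. There is thus no proof in the paper to compare your sketches for (1) and (2) against. Your outline of the Rankin--Swinnerton-Dyer and Nozaki arguments is a reasonable plan of attack, but, as you yourself acknowledge, the uniform control of the cuspidal component along the arc is precisely the hard analytic step, and you have not carried it out; what you have written for (1) and (2) is a program, not a proof. There is also a slip in your treatment of (2): if the passage from $\ell$ to $\ell+8$ introduces an additional zero at $e^{2\pi\sqrt{-1}/3}$, the zero sets are not literally identical---the classical Nozaki phenomenon you cite is interlacing rather than equality, so your argument cannot yield the conjecture exactly as stated.

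For part~(3) your approach is on the right track and agrees in shape with the paper's. The route taken in \cite{miezaki} for Type~II, and mirrored in the present paper for Types I, III, IV via Theorem~\ref{thm:T}, Lemma~\ref{lem:mod}, and the proof of Theorem~\ref{thm:main}(II)--(III), is more concrete than yours: one sums over the group explicitly to obtain a closed formula for $\widetilde{\varphi_\ell^{G_{\rm X}}}(x,y)$, so that the only denominator is an explicit quantity (e.g.\ $2+\sqrt{2}^{\ell}$ in Type~I), which Fermat's little theorem immediately shows to be a $p$-adic unit---no Bernoulli-type closed form is needed. Once the polynomial coefficients are $p$-integral, the $p$-integrality of $Th(\widetilde{\varphi_{2(p-1)}^{G_{\rm X}}})$ follows exactly as you say, from the integrality of the Fourier coefficients of $f_0$ and $f_1$. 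This also sidesteps your worry about $p\mid|G_{\rm II}|$ at $p=3$: in the normalized polynomial one never divides by $|G_{\rm II}|$, only by the leading coefficient.
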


To explain our results, 
we introduce the zeta polynomials, 
which were defined by Duursma \cite{D1}.
Analogous to coding theory, we say $f \in \CC[x, y]$ 
is the formal weight enumerator of degree $n$ 
if $f$ is a homogeneous polynomial of degree $n$ 
and the coefficient of $x^n$ is one. 
Also, for
\[
f (x, y) = x^n +
\sum_{i=d}^n
A_ix^{n-i}y^i\ (A_d \neq 0),
\]
$d$ is the minimum distance of $f$. 
Let $R$ be a commutative ring and $R[[T]]$ be the formal power series 
ring over $R$. 
For
$Z(T) =
\sum_{i=0}^{\infty}a_nT_n \in R[[T]]$, 
$[T^k]Z(T)$ denotes the coefficient $a_k$. 
This gives the following lemma. 
\begin{lem}[cf.~\cite{D1}]\label{lem:D}
Let $f$ be a formal weight enumerator of degree $n$, 
$d$ be the minimum distance, and $q$ be any real number not one.
Then there exists a unique polynomial $P_f(T) \in \CC[T]$ 
of degree at most $n-d$ such that the following equation holds: 
\[
[T^{n-d}]
\frac{P_f(T)}
{(1 - T )(1 -qT ) }
(xT + y(1 - T ))^n =
\frac{f (x, y) - x^n}
{q - 1}.
\]
\end{lem}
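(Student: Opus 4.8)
The plan is to reformulate the lemma as the statement that one explicit $\CC$-linear map between two vector spaces of the same finite dimension is a bijection, so that existence and uniqueness of $P_f$ both fall out of injectivity together with a dimension count.

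Write $f(x,y)=x^{n}+\sum_{i=d}^{n}A_{i}x^{n-i}y^{i}$ with $A_{d}\neq 0$, so that, since $q\neq 1$,
\[
\frac{f(x,y)-x^{n}}{q-1}=\sum_{i=d}^{n}\frac{A_{i}}{q-1}\,x^{n-i}y^{i}
\]
lies in the space $V$ of homogeneous polynomials $g\in\CC[x,y]$ of degree $n$ with $[x^{n-i}y^{i}]g=0$ for $0\le i\le d-1$; note $\dim_{\CC}V=(n+1)-d=n-d+1$. Let $W=\{P\in\CC[T]:\deg P\le n-d\}$, also of dimension $n-d+1$, and define $\Phi\colon W\to\CC[x,y]$ by
\[
\Phi(P)=[T^{n-d}]\,\frac{P(T)\,(xT+y(1-T))^{n}}{(1-T)(1-qT)},
\]
where $\tfrac{1}{(1-T)(1-qT)}$ is read as an element of $\CC[[T]]$. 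The lemma is precisely the claim that $\Phi$ maps $W$ bijectively onto $V$.

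The computation that controls everything is the expansion
\[
(xT+y(1-T))^{n}=\sum_{i=0}^{n}\binom{n}{i}x^{n-i}y^{i}\,T^{n-i}(1-T)^{i},
\]
which yields, for each $i$,
\[
[x^{n-i}y^{i}]\,\Phi(P)=\binom{n}{i}\,[T^{n-d}]\,\frac{P(T)\,T^{n-i}(1-T)^{i}}{(1-T)(1-qT)}.
\]
For $0\le i\le d-1$ the numerator on the right is divisible by $T^{n-i}$ with $n-i\ge n-d+1>n-d$, so the coefficient of $T^{n-d}$ vanishes; hence $\Phi(W)\subseteq V$. For injectivity, suppose $\Phi(P)=0$. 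Using the displayed formula for $d\le i\le n$, cancelling one factor $1-T$ and substituting $m=i-d$, one reduces to the conditions
\[
[T^{m}]\bigl((1-T)^{\,d-1+m}\,Q(T)\bigr)=0\qquad(0\le m\le n-d),
\]
where $Q(T)=P(T)/(1-qT)\in\CC[[T]]$. Since each $(1-T)^{d-1+m}$ has constant term $1$, these conditions are lower triangular with unit diagonal in the unknowns $[T^{0}]Q,\dots,[T^{n-d}]Q$, so a short induction on $m$ forces $[T^{m}]Q=0$ for $0\le m\le n-d$; feeding this back through $P(T)=(1-qT)Q(T)$ and $\deg P\le n-d$ gives $P=0$. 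Thus $\Phi\colon W\to V$ is an injective linear map between spaces of equal dimension $n-d+1$, hence an isomorphism onto $V$, and since $\tfrac{f-x^{n}}{q-1}\in V$ there is a unique $P_{f}\in W$ with $\Phi(P_{f})=\tfrac{f-x^{n}}{q-1}$.

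I expect the injectivity step to be the only real obstacle: one must extract exactly the right family of scalar identities from the vanishing of the polynomial $\Phi(P)$, keep straight the three factors $P(T)$, $T^{n-i}(1-T)^{i}$ and $\tfrac{1}{(1-T)(1-qT)}$, and correctly run the index change relating the power of $y$ to the power of $T$; the recognition that the resulting system is unipotent triangular is what makes it collapse. Everything else — the binomial expansion, the containment $\Phi(W)\subseteq V$, and the dimension bookkeeping — is routine. If one prefers an explicitly constructive account, the same triangular system can instead be solved directly for the coefficients $p_{0},p_{1},\dots,p_{n-d}$ of $P_{f}$ in turn, which gives existence, while the homogeneous version of the system gives uniqueness.
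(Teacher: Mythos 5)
Your argument is correct and complete: the containment $\Phi(W)\subseteq V$, the reduction of $\Phi(P)=0$ to the unipotent lower-triangular system in the coefficients of $Q=P/(1-qT)$, and the final dimension count all check out (the cancellation of one factor $1-T$ uses $d\ge 1$, which is implicit in the paper's definition of a formal weight enumerator). The paper itself states this lemma with only a citation to Duursma and gives no proof, but your linear-algebra/triangularity argument is essentially the standard one from that source, so there is nothing further to compare.
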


\begin{df}[cf.~\cite{D2}]
For a formal weight enumerator $f$, we call the polynomial 
$P_f(T)$ determined in Lemma \ref{lem:D} the zeta polynomial of $f$ with respect to $q$. 
If all the zeros of $P_f(T)$ have absolute value $1/\sqrt{q}$, 
then we say that $f$ satisfies the Riemann hypothesis analogues (RHA). 
\end{df}

In \cite{miezaki}, 
we investigated the zeta polynomials of 
the Eisenstein polynomials for Type II. 
In the following, we assume that $q=2$. 
The cases of $\ell = 8$ and $\ell = 12$ are listed in Table \ref{Tab:sum3}. 
\begin{table}[thb]
\caption{Examples of zeta polynomials}
\label{Tab:sum3}
\begin{center}
\begin{tabular}{c|c}
\noalign{\hrule height0.8pt}
$\ell$ & $P_{\widetilde{\varphi_{\ell}^{G_{\rm{II}}}}}(T)$\\\hline
$8$ & $\frac{1}{5}+\frac{2 T}{5}+\frac{2 T^2}{5}$\\\hline
$12$ & $-\frac{1}{15}-\frac{2 T}{15}-\frac{2 T^2}{15}+\frac{4 T^4}{15}+\frac{8 T^5}{15}+\frac{8 T^6}{15}$\\
\noalign{\hrule height0.8pt}
\end{tabular}
\end{center}
\end{table}






In the previous paper, 
it was shown that 
Oura's observation for the zeta polynomial associated 
with Eisenstein polynomials holds. 
\begin{thm}[\cite{miezaki}]\label{thm:main1}
\begin{itemize}
\item [{\rm (I)}]
\begin{enumerate}
\item [{\rm (1)}]
$P_{\widetilde{\varphi_{\ell}^{G_{\rm{II}}}}}(T)$ satisfies RHA. 

\item [{\rm (2)}]
The zeros of $P_{\widetilde{\varphi_{\ell}^{G_{\rm{II}}}}}(T)$ interlace  those of $P_{\widetilde{\varphi_{\ell+8}^{G_{\rm{II}}}}}(T)$. 

\item [{\rm (3)}]
Let $p$ be an odd prime with $p\neq 5$. Then 
the coefficients of $P_{\widetilde{\varphi_{2(p-1)}^{G_{\rm{II}}}}}(T)$ are $p$-integral. 

\end{enumerate}
\item [{\rm (II)}]
Let $p$ be an odd prime. Then 
the coefficients of $\widetilde{\varphi_{2(p-1)}^{G_{\rm{II}}}}(x,y)$ are $p$-integral. 

\item [{\rm (III)}]
Conjecture \ref{conj;Oura} (3) is true. 
\end{itemize}
\end{thm}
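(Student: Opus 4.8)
\medskip
\noindent\textbf{Proof strategy.}
The plan rests on two inputs: a workable description of the Eisenstein polynomials $\widetilde{\varphi_\ell^{G_{\rm{II}}}}$, and the zeta machinery of Lemma~\ref{lem:D} together with the functional equation it produces. \emph{Step 1 (making the polynomials explicit).} Since $\CC[x,y]^{G_{\rm{II}}}=\CC[f,g]$ with $\deg f=8$ and $\deg g=24$, the degree-$\ell$ homogeneous invariant lies in the finite-dimensional span of $\{f^ag^b:8a+24b=\ell\}$ (the rank-one module of $\tfrac1{\sqrt2}\begin{pmatrix}1&1\\1&-1\end{pmatrix}$-anti-invariants handling the degrees $\ell\equiv4\pmod8$), and one extracts $\varphi_\ell^{G_{\rm{II}}}$ from the Molien-type generating series $\tfrac1{|G_{\rm{II}}|}\sum_{\sigma\in G_{\rm{II}}}(1-t\,\sigma x)^{-1}$, obtaining after normalisation a recursion for $\widetilde{\varphi_\ell^{G_{\rm{II}}}}$ in $\ell$ (pass from degree $\ell$ to degree $\ell+8$ by multiplying by $f$ and correcting inside $g\cdot\CC[f,g]$). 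One records along the way that the minimum distance of $\widetilde{\varphi_\ell^{G_{\rm{II}}}}$ is $d=4$: the element $\begin{pmatrix}1&0\\0&\sqrt{-1}\end{pmatrix}\in G_{\rm{II}}$ forces $A_1=A_2=A_3=0$, and $A_4\neq0$.

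\emph{Step 2 (parts (II) and (III)).} I would prove (II) directly on the polynomial side. The coefficients of $\varphi_\ell^{G_{\rm{II}}}=\tfrac1{|G_{\rm{II}}|}\sum_\sigma(\sigma x)^\ell$ lie in $\tfrac1{|G_{\rm{II}}|}\ZZ[\sqrt{-1},2^{-1/2}]$; since $\varphi_\ell^{G_{\rm{II}}}\in\QQ[x,y]$, since $2^{-\ell/2}\in\ZZ_{(p)}$ for odd $p$ and even $\ell$, and since $|G_{\rm{II}}|=2^6\cdot3$, these coefficients in fact lie in $\ZZ_{(p)}$ for every prime $p\notin\{2,3\}$. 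Hence $\widetilde{\varphi_{2(p-1)}^{G_{\rm{II}}}}=\varphi_{2(p-1)}^{G_{\rm{II}}}/\lambda$, with $\lambda=[x^{2(p-1)}]\varphi_{2(p-1)}^{G_{\rm{II}}}$, is $p$-integral as soon as $\lambda$ is a $p$-adic unit; writing $\lambda=\tfrac1{|G_{\rm{II}}|}\sum_\sigma a_\sigma^{2(p-1)}$ with $a_\sigma$ the $(1,1)$-entry of $\sigma$, the entries have $|a_\sigma|\in\{0,2^{-1/2},1\}$, those of modulus $1$ contributing $a_\sigma^{2(p-1)}=1$ and those of modulus $2^{-1/2}$ contributing $2^{-(p-1)}\zeta_\sigma^{2(p-1)}$ for roots of unity $\zeta_\sigma$, so Fermat's little theorem ($2^{-(p-1)}\equiv1\pmod p$) collapses $\lambda\bmod p$ to a finite character sum over $G_{\rm{II}}$, which one evaluates to a unit for $p\geq7$ (the cases $p=3,5$ directly; e.g.\ $\widetilde{\varphi_8^{G_{\rm{II}}}}=f\in\ZZ[x,y]$). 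Part (III) is then immediate: applying the theta map to the $p$-integral $\widetilde{\varphi_{2(p-1)}^{G_{\rm{II}}}}$ and using $f_0,f_1\in\ZZ[[q^{1/4}]]$ shows $Th(\widetilde{\varphi_{2(p-1)}^{G_{\rm{II}}}})$ has a $p$-integral $q$-expansion, which is Conjecture~\ref{conj;Oura}(3).

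\emph{Step 3 (part (I)).} Because $\widetilde{\varphi_\ell^{G_{\rm{II}}}}$ is an eigenvector of the MacWilliams transform $\tfrac1{\sqrt2}\begin{pmatrix}1&1\\1&-1\end{pmatrix}$ with eigenvalue $\varepsilon=\pm1$, Duursma's theory turns Lemma~\ref{lem:D} (with $q=2$, $n=\ell$, $d=4$) into the functional equation $P_{\widetilde{\varphi_\ell^{G_{\rm{II}}}}}(T)=\varepsilon\,2^{\gamma}T^{2\gamma}P_{\widetilde{\varphi_\ell^{G_{\rm{II}}}}}(\tfrac1{2T})$ with $\gamma=\tfrac\ell2-3$. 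Hence $P_{\widetilde{\varphi_\ell^{G_{\rm{II}}}}}(T)=T^{\gamma}Q_\ell(T+\tfrac1{2T})$ when $\varepsilon=1$ and $=(2T^2-1)\,T^{\gamma-1}Q_\ell(T+\tfrac1{2T})$ when $\varepsilon=-1$, with $Q_\ell$ a polynomial of degree $\gamma$, resp.\ $\gamma-1$; and since $|T|=1/\sqrt2\iff T+\tfrac1{2T}\in[-\sqrt2,\sqrt2]$ (and the factor $2T^2-1$ already has both its zeros on that circle), the RHA for $\widetilde{\varphi_\ell^{G_{\rm{II}}}}$ is \emph{equivalent} to the statement that $Q_\ell$ has all of its zeros real and lying in $[-\sqrt2,\sqrt2]$. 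To prove this I would determine $Q_\ell$ explicitly, most robustly from the bivariate generating function $\sum_\ell P_{\widetilde{\varphi_\ell^{G_{\rm{II}}}}}(T)\,u^\ell$ (an algebraic function of $T$ and $u$ produced by Lemma~\ref{lem:D} and the generating series of Step~1), and then invoke a real-rootedness criterion — either recognising $Q_\ell$ as a classical (hypergeometric / Jacobi-type) orthogonal polynomial, or producing a recursion among the $Q_\ell$ of a fixed family ($\ell$ in a fixed class modulo $8$) that propagates real-rootedness and confinement to $[-\sqrt2,\sqrt2]$; the interlacing (I)(2) then falls out of the same recursion and transfers to the zeros of $P_{\widetilde{\varphi_\ell^{G_{\rm{II}}}}}$ and $P_{\widetilde{\varphi_{\ell+8}^{G_{\rm{II}}}}}$ on the circle $|T|=1/\sqrt2$. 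Finally, for (I)(3): by Lemma~\ref{lem:D} with $q=2$ the polynomial $P_{\widetilde{\varphi_\ell^{G_{\rm{II}}}}}$ is the solution of an integral linear system, and the denominators that occur divide $\binom nd$; since a short computation gives $\binom{2(p-1)}{4}\equiv5\pmod p$, these denominators are $p$-adic units exactly when $p\neq5$, which together with (II) yields the $p$-integrality of $P_{\widetilde{\varphi_{2(p-1)}^{G_{\rm{II}}}}}$.

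\emph{The main obstacle} is assertion (I)(1). The reduction to the real-rootedness of $Q_\ell$ in $[-\sqrt2,\sqrt2]$ is purely formal, but $P_{\widetilde{\varphi_\ell^{G_{\rm{II}}}}}$ is \emph{not} a simple function of the zeta polynomial of the leading power $f^{\lfloor\ell/8\rfloor}$ — the degrees of the $Q_\ell$ within a family jump by $4$, so this is not an off-the-shelf orthogonal polynomial sequence — and one must control how the $g$-correction in $\widetilde{\varphi_\ell^{G_{\rm{II}}}}$ displaces the zeros and check that none leaves the circle. Extracting the needed structure (a hypergeometric identity for $Q_\ell$, or a real-rootedness-preserving recursion) from the generating function is where essentially all the work and all the risk lie; once $Q_\ell$ is pinned down, the sign-change estimates at $\pm\sqrt2$ and the interlacing are routine.
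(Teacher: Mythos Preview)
This theorem is quoted from the earlier paper \cite{miezaki} and is not re-proved in the present paper, so there is no proof here to match against line by line. What the present paper \emph{does} contain is the proof of the parallel Theorem~\ref{thm:main} for Types I, III, IV, and that makes the intended method transparent; it differs in kind from your proposal.

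The paper's route is to write down both the Eisenstein polynomial and its zeta polynomial in \emph{closed form}. For Type~I one finds $\widetilde{\varphi_\ell^{G_{\rm I}}}(x,y)=x^\ell+y^\ell+\tfrac{2}{2+\sqrt2^{\,\ell}}\sum_{0<j<\ell,\,2\mid j}\binom{\ell}{j}x^{\ell-j}y^j$, and then, via the normalized weight enumerator of Definition~\ref{df:nwe} and Theorem~\ref{thm:D3}, the zeta polynomial collapses to the single expression $P_{\widetilde{\varphi_\ell^{G_{\rm I}}}}(T)=(2+\sqrt2^{\,\ell}T^{\ell-2})/(2+\sqrt2^{\,\ell})$. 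With such a formula in hand, RHA, interlacing, and $p$-integrality are each one-line observations (the last via Fermat, Lemma~\ref{lem:mod}). The companion paper \cite{miezaki} treats Type~II the same way: an explicit $\widetilde{\varphi_\ell^{G_{\rm II}}}$, an explicit $P_{\widetilde{\varphi_\ell^{G_{\rm II}}}}(T)$, and then everything is read off. No functional-equation reduction, no orthogonal-polynomial identification, no recursion is needed.

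Your plan is structurally sound but stops short of a proof. Step~2 is fine in outline and close in spirit to the paper's Lemma~\ref{lem:mod}, though ``one evaluates to a unit for $p\ge7$'' still hides the computation that actually pins down $\lambda\bmod p$. In Step~3 your claim that the denominators appearing in $P_{\widetilde{\varphi_\ell^{G_{\rm II}}}}$ divide $\binom{n}{d}$ is not what Lemma~\ref{lem:D} or Theorem~\ref{thm:D3} provides: the normalized enumerator carries $\binom{n}{i}^{-1}$ for every $i$ from $d$ to $n$, so a uniform bound on denominators requires a separate argument. Most importantly, you yourself flag (I)(1) as the obstacle and leave it open: you neither produce $Q_\ell$ explicitly, nor identify it with a known real-rooted family, nor exhibit the promised recursion. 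That is a genuine gap. In the paper's approach this difficulty simply does not arise, because once $P_{\widetilde{\varphi_\ell^{G_{\rm II}}}}(T)$ is written down in closed form the location of its zeros on $|T|=1/\sqrt2$ and their interlacing as $\ell\mapsto\ell+8$ are immediate.
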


The main purpose of the present paper 
is to show that similar results hold for the remaining cases. 
We set the values of $w_{\rm{X}}$ and $q_{\rm{X}}$ the same as in the previous work for consistency:\begin{tabbing}
Type I: $w_{\rm{I}}=2$, $q_{\rm{I}}=2$, \\
Type II: $w_{\rm{II}}=8$, $q_{\rm{II}}=2$, \\
Type III: $w_{\rm{III}}=3$, $q_{\rm{III}}=3$, \\
Type IV: $w_{\rm{IV}}=2$, $q_{\rm{IV}}=4$. 
\end{tabbing}
\begin{thm}\label{thm:main}
For $\rm{X}\in \{\rm{I},\rm{III},\rm{IV}\}$, 
\begin{itemize}
\item [{\rm (I)}]
\begin{enumerate}
\item [{\rm (1)}]
$P_{\widetilde{\varphi_{\ell}^{G_{\rm{X}}}}}(T)$ satisfies RHA for $q_{\rm{X}}$. 

\item [{\rm (2)}]
The zeros of $P_{\widetilde{\varphi_{\ell}^{G_{\rm{X}}}}}(T)$ interlace  those of $P_{\widetilde{\varphi_{\ell+w_{\rm{X}}}^{G_{\rm{X}}}}}(T)$. 

\item [{\rm (3)}]
Let $p$ be an odd prime and assume that $p\neq 3$ for the Type III case. Then 
the coefficients of $P_{\widetilde{\varphi_{2(p-1)}^{G_{\rm{X}}}}}(T)$ are $p$-integral. 

\end{enumerate}
\item [{\rm (II)}]
Let $p$ be an odd prime and assume that $p\neq 3$ for the Type III case. Then 
the coefficients of $\widetilde{\varphi_{2(p-1)}^{G_{\rm{X}}}}(x,y)$ are $p$-integral. 

\item [{\rm (III)}]
Let $p$ be an odd prime and assume that $p\neq 3$ for the Type III case. 
The coefficients of $Th(\widetilde{\varphi_{2(p-1)}^{G_{\rm{X}}}})$ 
are $p$-integral. 
\end{itemize}
\end{thm}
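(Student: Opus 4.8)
The plan is to reduce everything to explicit, finite computations in $\CC[x,y]^{G_{\rm X}}$ for each of the three types, mirroring the strategy used for Type II in \cite{miezaki}, and then to invoke the structural lemmas already available. First I would make the Eisenstein polynomials completely explicit: since $\CC[x,y]^{G_{\rm X}}=\CC[f,g]$ with the $f,g$ listed in the introduction, and since $\varphi_\ell^{G_{\rm X}}$ is a $G_{\rm X}$-invariant of degree $\ell$, one can write $\widetilde{\varphi_\ell^{G_{\rm X}}}$ as a $\QQ$-linear combination of monomials $f^a g^b$ with $aw_{\rm X}'+b w_{\rm X}''=\ell$ (here $w_{\rm X}',w_{\rm X}''$ are the degrees of $f,g$), the coefficients being determined by a finite linear system with rational entries coming from the group average $\frac{1}{|G_{\rm X}|}\sum_\sigma(\sigma x)^\ell$. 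The key point for parts (II) and (III) is $p$-integrality of these coefficients when $\ell=2(p-1)$: I would show the relevant coefficients lie in $\ZZ[1/N_{\rm X}]$ for a fixed integer $N_{\rm X}$ depending only on the type (coming from $|G_{\rm X}|$ and the denominators in $f,g$), and then that for $p$ odd (and $p\neq3$ in Type III, since $3\mid q_{\rm III}$ and $3$ divides the relevant structure constants) no $p$ divides this denominator once $\ell=2(p-1)$; the Type III exclusion of $p=3$ is exactly the analogue of the $p=5$ exclusion in Type II, where $5=|G_{\rm II}|/\,\text{(something)}$ sits in the denominator.

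Next, for part (I), I would feed the explicit $\widetilde{\varphi_\ell^{G_{\rm X}}}$ into Lemma~\ref{lem:D} with $q=q_{\rm X}$ to obtain the zeta polynomial $P_{\widetilde{\varphi_\ell^{G_{\rm X}}}}(T)$; the minimum distance $d$ of $\widetilde{\varphi_\ell^{G_{\rm X}}}$ can be read off from the $f,g$ expansion (the lowest-degree $y$-term), and the linear relation defining $P_f$ then gives its coefficients as explicit rational functions of the coefficients of $\widetilde{\varphi_\ell^{G_{\rm X}}}$. For RHA, part (I)(1), the approach is to exhibit, for each type, a recursion or closed form for $P_{\widetilde{\varphi_\ell^{G_{\rm X}}}}(T)$ as $\ell$ ranges over its admissible residue class, and prove that all zeros lie on $|T|=1/\sqrt{q_{\rm X}}$. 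The cleanest route is the substitution $T=q_{\rm X}^{-1/2}e^{i\theta}$ turning $P$ into a real trigonometric polynomial (after a unimodular normalisation, using a functional-equation symmetry $T\mapsto 1/(q_{\rm X}T)$ of $P_f$ that follows from the $x\leftrightarrow y$-type symmetry of $f$ for these self-dual types), and then counting sign changes / using a Sturm-type or Eneström–Kakeya-type argument; for (I)(2), interlacing, I would use the standard fact that RHA polynomials satisfying a common three-term recursion interlace, so it suffices to establish that $P_{\widetilde{\varphi_\ell^{G_{\rm X}}}}$ and $P_{\widetilde{\varphi_{\ell+w_{\rm X}}^{G_{\rm X}}}}$ are consecutive members of such a recursion, which again reduces to the explicit $f,g$ bookkeeping. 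Part (I)(3) is then the same $p$-integrality argument as in (II), carried through the (rational, denominator-controlled) formula expressing the coefficients of $P_f$ in terms of those of $f$.

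Finally, part (III) follows from (II) essentially for free: the theta map $Th$ sends $x,y$ to the specific $q$-series $f_0,f_1$ with $\ZZ$-coefficients (they are $1+O(q)$ and $2q^{1/8}+O(\cdot)$ type series whose products again have $\ZZ$-coefficients after the appropriate normalisation), so $Th$ carries $\ZZ[1/N_{\rm X}]$-coefficient polynomials to $q$-series with coefficients in $\ZZ[1/N_{\rm X}]$; once $p\nmid N_{\rm X}$ (guaranteed by $p$ odd and $p\neq3$ in Type III), $p$-integrality of the coefficients of $\widetilde{\varphi_{2(p-1)}^{G_{\rm X}}}$ passes directly to $Th(\widetilde{\varphi_{2(p-1)}^{G_{\rm X}}})$. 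I expect the main obstacle to be part (I)(1), the RHA: unlike (II), (III), and (I)(2)–(I)(3), which are ``follow the denominators'' arguments, proving that \emph{all} zeros of an infinite family of polynomials lie on a circle requires an honest analytic or combinatorial argument, and the three types have genuinely different $f,g$ (different degrees, different $q_{\rm X}$), so one may need a slightly different normalisation or recursion in each case; handling Type III, where $q_{\rm III}=3$ and the exponents are multiples of $3$ rather than $4$, is likely the fussiest.
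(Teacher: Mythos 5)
Your overall architecture matches the paper's: compute $\widetilde{\varphi_\ell^{G_{\rm X}}}$ explicitly, derive a closed form for $P_{\widetilde{\varphi_\ell^{G_{\rm X}}}}(T)$ via Lemma~\ref{lem:D} (the paper uses Duursma's relation between $P_f$ and the normalized weight enumerator $N_f$), read off (I)(1)--(2), control denominators for the integrality statements, and push (II) through the theta map for (III). Two remarks on emphasis: the explicit forms come out of a direct average over the (small) groups rather than the $\CC[f,g]$ bookkeeping you propose, and they are clean enough that the zeta polynomials are essentially a binomial $a+bT^{\ell-2}$ (Type I) or a finite geometric series in $(-q_{\rm X}^{1/2}T)$ (Types III, IV); so RHA and interlacing, which you flag as the hard analytic part, are in fact immediate --- the zeros are equally spaced points on the circle $|T|=1/\sqrt{q_{\rm X}}$, and consecutive families interlace by inspection. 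No Sturm or Enestr\"om--Kakeya machinery is needed.

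The genuine gap is in your $p$-integrality argument. You claim the coefficients of $\widetilde{\varphi_\ell^{G_{\rm X}}}$ lie in $\ZZ[1/N_{\rm X}]$ for a \emph{fixed} $N_{\rm X}$ depending only on the type. This is false: the denominators are $2+\sqrt{2}^{\ell}$, $3+\sqrt{3}^{\ell}$, and $2+2^{\ell}$ respectively, which acquire arbitrarily large prime factors as $\ell$ grows (e.g.\ for Type IV with $\ell=6$ the coefficient of $x^4y^2$ is $45/11$). Your sentence is also internally inconsistent --- if the denominator were fixed, the restriction to $\ell=2(p-1)$ would play no role, yet that restriction is the whole content of (I)(3), (II), (III). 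The missing arithmetic input is Fermat's little theorem: for $\ell=2(p-1)$ one has $\sqrt{2}^{\ell}=2^{p-1}\equiv 1$, $\sqrt{3}^{\ell}=3^{p-1}\equiv 1$ (for $p\neq 3$), and $2^{\ell}=(2^{p-1})^2\equiv 1 \pmod{p}$, so the denominators are units mod $p$; this is exactly the paper's Lemma~\ref{lem:mod} and is what the exclusion of $p=3$ in Type III is really about (not a structure constant of $G_{\rm III}$). Without this step, parts (I)(3), (II), and hence (III) do not follow from your outline. The final reduction of (III) to (II) via the integrality of the Fourier coefficients of $f_0,f_1$ is fine and is what the paper does.
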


In Section $2$, 
the proof of Theorem \ref{thm:main} is provided along with concluding remarks. 

\section{Proof of Theorem \ref{thm:main}}

In this section, 
we provide the proof of Theorem \ref{thm:main}. 

\subsection{Preliminaries}

Before proving Theorem \ref{thm:main}, 
we first recall a property of zeta polynomials. 

The zeta polynomial $P_f(T)$ associated with $f$ is related 
to the normalized weight enumerator of $f$ as follows: \begin{df}[cf.~\cite{D3}]\label{df:nwe}
For a formal weight enumerator $f(x,y)=\sum_{i=0}^{n} A_ix^{n-i}y^i$, 
we make the following definition of a normalized weight enumerator.
\begin{align*}
N_f(t)=\frac{1}{q-1}\sum_{i=d}^{n} A_i/\binom{n}{i}t^{i-d}. 
\end{align*}
\end{df}
The relation between $P_f(T)$ and $N_f(t)$ is given by the following theorem. 
\begin{thm}[cf.~\cite{D3}]\label{thm:D3}
For a given formal weight enumerator $f(x,y)$ with minimum distance $d$, 
the zeta polynomial $P_f(T)$ and the normalized 
weight enumerator $N_f(t)$ have the following relation: 
\[
\frac{P_f(T)}{(1-T)(1-qT)}(1-T)^{d+1}\equiv N_f\left(\frac{T}{1-T}\right)\pmod{T^{n-d+1}}. 
\]
\end{thm}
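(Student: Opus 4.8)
\emph{Proof proposal.} Set $Z(T):=\dfrac{P_f(T)}{(1-T)(1-qT)}\in\CC[[T]]$; since $\deg P_f\le n-d$ by Lemma \ref{lem:D} it suffices to pin down $Z(T)$ modulo $T^{n-d+1}$, and the asserted identity is precisely $(1-T)^{d+1}Z(T)\equiv N_f\!\bigl(\tfrac{T}{1-T}\bigr)\pmod{T^{n-d+1}}$. My plan has three steps: (a) rewrite the defining property of $P_f$ from Lemma \ref{lem:D} as a system of scalar relations among the coefficients of $Z(T)$; (b) observe that this system determines $Z(T)\bmod T^{n-d+1}$ uniquely; (c) check that the claimed right-hand side $N_f\!\bigl(\tfrac{T}{1-T}\bigr)/(1-T)^{d+1}$ satisfies the same relations, and finish by uniqueness.

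For (a) I would expand $(xT+y(1-T))^n=\sum_{j=0}^{n}\binom nj x^j y^{\,n-j}T^j(1-T)^{n-j}$ in Lemma \ref{lem:D}, multiply by $Z(T)$, and equate the coefficient of $x^{\,n-i}y^{\,i}$ on both sides (on the left it comes only from $j=n-i$): for $d\le i\le n$,
\[
\binom ni\,[T^{n-d}]\bigl(T^{\,n-i}(1-T)^{i}Z(T)\bigr)=\frac{A_i}{q-1}.
\]
Using $[T^{n-d}]\bigl(T^{\,n-i}h(T)\bigr)=[T^{\,i-d}]h(T)$, this reads
\[
[T^{\,i-d}]\bigl((1-T)^{i}Z(T)\bigr)=\frac{A_i}{(q-1)\binom ni}=:c_i\qquad(d\le i\le n);
\]
for $0\le i<d$ both sides vanish ($A_i=0$ when $0<i<d$, and the two $x^n$'s cancel), which is consistent with $i-d<0$ on the left.

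For (b) I would note that the relation with $i=d+m$ expresses $[T^m]Z(T)$ through $[T^0]Z,\dots,[T^{m-1}]Z$ and $c_{d+m}$, since $[T^m]\bigl((1-T)^{d+m}Z(T)\bigr)$ contains $[T^m]Z$ with coefficient $1$; running $m=0,1,\dots,n-d$ shows the $n-d+1$ relations determine $Z(T)\bmod T^{n-d+1}$. For (c), writing $R(T):=N_f\!\bigl(\tfrac{T}{1-T}\bigr)=\sum_{i=d}^{n}c_i\,T^{\,i-d}(1-T)^{-(i-d)}$, one computes
\[
(1-T)^{i}\cdot\frac{R(T)}{(1-T)^{d+1}}=(1-T)^{\,i-d-1}R(T)=\sum_{j=d}^{n}c_j\,T^{\,j-d}(1-T)^{\,i-j-1},
\]
whose $[T^{\,i-d}]$-coefficient equals $\sum_j c_j\,[T^{\,i-j}]\bigl((1-T)^{\,i-j-1}\bigr)$; the $j$-th term vanishes for $j>i$ (negative exponent) and for $j<i$ (a polynomial of degree $i-j-1<i-j$), and equals $c_i$ for $j=i$. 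Hence $R(T)/(1-T)^{d+1}$ obeys the relations of step (a), so by (b) it is congruent to $Z(T)$ modulo $T^{n-d+1}$ — the asserted identity.

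I expect no genuine obstacle beyond bookkeeping: keeping the direction of coefficient extraction straight in step (a), checking that the index ranges create no spurious equations, and noticing in step (b) that $T^{n-d+1}$ is exactly the precision at which the triangular system closes — consistent with $\deg P_f\le n-d$ from Lemma \ref{lem:D}, which is what makes the scheme non-circular. In essence this is Duursma's argument \cite{D3}, reorganized around the uniqueness built into Lemma \ref{lem:D}.
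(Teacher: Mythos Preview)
The paper does not supply its own proof of this theorem: it is stated with the attribution ``cf.~\cite{D3}'' and immediately used as a tool, so there is nothing in the paper to compare your argument against. Your write-up is therefore a genuine addition rather than a reconstruction.

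Your argument is correct. The coefficient extraction in step~(a) is the right reading of Lemma~\ref{lem:D}: comparing the $x^{n-i}y^{i}$-parts for $d\le i\le n$ gives exactly the relations $[T^{i-d}]\bigl((1-T)^{i}Z(T)\bigr)=A_i/\bigl((q-1)\binom{n}{i}\bigr)$, and the cases $0\le i<d$ are vacuous as you note. The triangularity in step~(b) is clear since the leading term of $(1-T)^{d+m}$ is $1$. Step~(c) is the only place to be careful, and your term-by-term analysis of $[T^{i-j}]\bigl((1-T)^{i-j-1}\bigr)$ is fine: for $j>i$ the index $i-j$ is negative, for $j<i$ the expression $(1-T)^{i-j-1}$ is a polynomial of degree $i-j-1$, and for $j=i$ one gets $[T^{0}](1-T)^{-1}=1$. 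Uniqueness from step~(b) then closes the loop. The one cosmetic remark is that your parenthetical ``negative exponent'' in the $j>i$ case should be read as referring to the extraction index $i-j$, not to the exponent on $(1-T)$; the latter is also negative there but that alone would not kill the term.
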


\subsection{Explicit forms of Eisenstein polynomials}

The explicit forms of the Eisenstein polynomials 
$\varphi_\ell^{G_{\rm{X}}}(x,y)$ are given by the following theorem.
\begin{thm}\label{thm:T}
\begin{enumerate}
\item [{\rm (1)}]

{\rm Type I:} 
\begin{align*}
&\widetilde{\varphi_\ell^{G_{\rm{I}}}}(x,y)\\
&=
\left\{
\begin{array}{ll}
\displaystyle
x^\ell + y^{\ell} +\frac{2}{2+\sqrt{2}^{\ell}}
\sum_{0<j<\ell,j\equiv 0\pmod{2}}
\binom{\ell}{j} x^{\ell-j}y^j &\mbox{ if } \ell\equiv 0\pmod{2},\\
0 &\mbox{ if } \ell\not\equiv 0\pmod{2}. 
\end{array}
\right.
\end{align*}

\item [{\rm (2)}]

{ \rm Type III:} 
\begin{align*}
&\widetilde{\varphi_\ell^{G_{\rm {III}}}}(x,y)\\
&=
\left\{
\begin{array}{ll}
\displaystyle
x^\ell + \frac{3}{3+\sqrt{3}^{\ell}}
\sum_{0<j<\ell,j\equiv 0\pmod{3}}
2^{j} \binom{\ell}{j} x^{\ell-j}y^j 
&\mbox{ if } \ell\equiv 0\pmod{4},\\
0 &\mbox{ if } \ell\not\equiv 0\pmod{4}. 
\end{array}
\right.
\end{align*}

\item [{\rm (3)}]

{\rm Type IV:} 
\begin{align*}
&\widetilde{\varphi_\ell^{G_{\rm{IV}}}}(x,y)\\
&=
\left\{
\begin{array}{ll}
\displaystyle
x^\ell + \frac{2}{2+2^{\ell}}
\sum_{0<j<\ell,j\equiv 0\pmod{2}}
3^{j} \binom{\ell}{j} x^{\ell-j}y^j
&\mbox{ if } \ell\equiv 0\pmod{2},\\
0 &\mbox{ if } \ell\not\equiv 0\pmod{2}. 
\end{array}
\right.
\end{align*}

\end{enumerate}

\end{thm}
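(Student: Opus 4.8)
The plan is to evaluate the group average directly. For $\sigma=\begin{pmatrix}a&b\\ c&d\end{pmatrix}$ one has $(\sigma x)^\ell=(ax+by)^\ell$, so the summand depends only on the first row $(a,b)$ of $\sigma$, i.e.\ on the vector $(1,0)\,\sigma$. Thus, as $\sigma$ ranges over $G_{\rm X}$, its first row ranges over the orbit $\mathcal{O}_{\rm X}$ of $(1,0)$ under right multiplication, each orbit point occurring with the constant multiplicity $|G_{\rm X}|/|\mathcal{O}_{\rm X}|$ by orbit--stabilizer. Hence
\[
\varphi_\ell^{G_{\rm X}}(x,y)=\frac{1}{|\mathcal{O}_{\rm X}|}\sum_{(a,b)\in\mathcal{O}_{\rm X}}(ax+by)^\ell ,
\]
and the theorem reduces to determining $\mathcal{O}_{\rm X}$ and then carrying out a binomial expansion.

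To find $\mathcal{O}_{\rm X}$ I would repeatedly apply the two generators of $G_{\rm X}$ to $(1,0)$ until the set closes up; the resulting set is then the full orbit (it is generator-stable and every element is reached from $(1,0)$), so $|G_{\rm X}|$ need not be known in advance. This yields $\mathcal{O}_{\rm I}=\{(\pm1,0),(0,\pm1),\tfrac1{\sqrt2}(\pm1,\pm1)\}$ (eight points), $\mathcal{O}_{\rm IV}=\{(\pm1,0),\tfrac12(\pm1,\pm3)\}$ (six points), and $\mathcal{O}_{\rm III}=\{(c,0):c^4=1\}\cup\{\tfrac c{\sqrt3}(1,2\omega^k):c^4=1,\ k\in\ZZ/3\ZZ\}$ (sixteen points), where $\omega=e^{2\pi\sqrt{-1}/3}$.

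For the expansion, substitute and use the elementary identities $\sum_{\varepsilon=\pm1}\varepsilon^m$ ($=2$ if $2\mid m$, else $0$), $\sum_{c^4=1}c^m$ ($=4$ if $4\mid m$, else $0$), and $\sum_{k=0}^2\omega^{km}$ ($=3$ if $3\mid m$, else $0$). Applied to the scalar part of each orbit these force $\varphi_\ell^{G_{\rm X}}\equiv0$ unless $2\mid\ell$ (Types I, IV), resp.\ $4\mid\ell$ (Type III); this also follows from $\CC[x,y]^{G_{\rm X}}=\CC[f_{\rm X},g_{\rm X}]$ having no nonzero element of the excluded degrees, since $(\deg f_{\rm X},\deg g_{\rm X})=(2,8),(4,12),(2,6)$. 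In the surviving degrees the same identities kill every monomial $x^{\ell-j}y^j$ with $j$ not in the relevant residue class ($2\mid j$ for Types I, IV; $3\mid j$ for Type III), leaving exactly a sum over the admissible $j$; the quantities $\sqrt2^{\,\ell},\sqrt3^{\,\ell},2^\ell$ and the factors $3^j,2^j$ in the statement come directly from the scalings $\tfrac1{\sqrt2},\tfrac1{\sqrt3},\tfrac12$ and the entries $3,2$ of the orbit vectors. Finally one reads off the coefficient of $x^\ell$, checks it is nonzero (so that $\widetilde{\varphi_\ell^{G_{\rm X}}}$ makes sense), and divides; the denominators $2+\sqrt2^{\,\ell}$, $3+\sqrt3^{\,\ell}$, $2+2^\ell$ are exactly that coefficient after clearing $1/|\mathcal{O}_{\rm X}|$, and the stand-alone $y^\ell$ in the Type I formula reflects that $(0,\pm1)\in\mathcal{O}_{\rm I}$ while the Type III and Type IV orbits contain no vector with vanishing first coordinate.

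The one step that is more than bookkeeping is computing $\mathcal{O}_{\rm III}$: one iterates the generators of $G_{\rm III}$, simplifies identities such as $1+2\omega=\sqrt{-3}$, and observes that the newly produced vectors are the multiples of $\tfrac1{\sqrt3}(1,2\omega^k)$ by the fourth roots of unity, which is what makes the orbit close at sixteen elements. With that in hand, Type III runs exactly parallel to the (routine) Types I and IV, with $\{1,\sqrt{-1},-1,-\sqrt{-1}\}$ and $\{1,\omega,\omega^2\}$ in the role that $\{1,-1\}$ plays in those cases.
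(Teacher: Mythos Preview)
Your proposal is correct and follows essentially the same approach as the paper: a direct evaluation of the group average by listing all contributing linear forms and expanding via the binomial theorem with root-of-unity identities. Your orbit-of-$(1,0)$ observation is a clean way to organize what the paper does by simply enumerating all elements of $G_{\rm X}$ (they refer to an external list), but the underlying computation is the same.
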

\begin{proof}
We prove only the Type I case. 
In Appendix A, 
we give proofs covering the other cases.

By a direct calculation, 
\begin{align*}
\widetilde{\varphi_\ell^{G_{\rm{I}}}}&(x,y)\\
&=
\frac{1}{2}\frac{1}{1+2(\frac{1}{\sqrt{2}})^\ell}
\left(
x^\ell
+\left(\frac{1}{\sqrt{2}}(x+y)\right)^\ell
+\left(\frac{1}{\sqrt{2}}(x-y)\right)^\ell
+y^\ell\right.\\
&
\left.+
(-x)^\ell
+\left(-\frac{1}{\sqrt{2}}(x+y)\right)^\ell
+\left(-\frac{1}{\sqrt{2}}(x-y)\right)^\ell
+(-y)^\ell
\right). 
\end{align*}
Then the result follows. 

The elements of $G_{\rm{I}}$ are 
listed on the homepage of one of the authors \cite{miezakiweb}. 

\end{proof}

\subsection{Explicit forms of zeta polynomials}
To prove Theorem \ref{thm:main}, 
we provide explicit formulas for the zeta function associated with the
Eisenstein polynomials $\widetilde{\varphi_{\ell}^{G_{\rm{X}}}}$ in the following theorem.
\begin{thm}\label{thm:Z}
\begin{enumerate}
\item [{\rm (1)}]

{\rm Type I:} For $\ell\equiv 0\pmod{2}$, 
\[
P_{\widetilde{\varphi_{\ell}^{G_{\rm{I}}}}}(T)
=\frac{2+\sqrt{2}^{\ell}T^{\ell-2}}{2+\sqrt{2}^\ell}. 
\]

\item [{\rm (2)}]

{ \rm Type III:}  For $\ell\equiv 0\pmod{4}$, 
\[
P_{\widetilde{\varphi_{\ell}^{G_{\rm{III}}}}}(T)=
\frac{3\cdot 2^2}{3+\sqrt{3}^{\ell}}
\sum_{j=0}^{(\ell-4)/2}
(-3)^jT^{2j}. 
\]

\item [{\rm (3)}]

{\rm Type IV:}  For $\ell\equiv 0\pmod{2}$, 
\[
P_{\widetilde{\varphi_{\ell}^{G_{\rm{IV}}}}}(T)=
\frac{6}{2+2^\ell}
\sum_{j=0}^{\ell-2}
(-2)^jT^{j}. 
\]

\end{enumerate}

\end{thm}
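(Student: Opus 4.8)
The plan is to establish Theorem~\ref{thm:Z} by combining the explicit forms of the Eisenstein polynomials from Theorem~\ref{thm:T} with the normalized-weight-enumerator machinery of Theorem~\ref{thm:D3}. For each type $\rm{X}\in\{\rm{I},\rm{III},\rm{IV}\}$ the recipe is the same: first read off the minimum distance $d$ of $\widetilde{\varphi_\ell^{G_{\rm{X}}}}$ from Theorem~\ref{thm:T}, then compute the normalized weight enumerator $N_f(t)$ of Definition~\ref{df:nwe}, and finally invert the congruence in Theorem~\ref{thm:D3} to recover $P_f(T)$. The key simplification is that in each case the coefficients $A_j$ of $\widetilde{\varphi_\ell^{G_{\rm{X}}}}$ are (up to the common scalar $\tfrac{w_{\rm{X}}}{w_{\rm{X}}+\sqrt{q_{\rm{X}}}^{\,\ell}}$-type factor) equal to a geometric-type weight $c^{j}\binom{\ell}{j}$, so that the ratios $A_j/\binom{\ell}{j}$ appearing in $N_f(t)$ form a geometric progression. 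That is the structural reason the zeta polynomials come out as truncated geometric series.

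In detail: for Type~I one has $d=2$ (the first nonzero off-diagonal term is the $j=2$ term, since $\ell\equiv 0\pmod 2$), and $A_j/\binom{\ell}{j}$ equals a constant for $2\le j\le \ell-2$ with a boundary correction at $j=\ell$. So $N_f(t)$ is essentially $\mathrm{const}\cdot(1+t+\cdots+t^{\ell-4})$ plus a top-degree term, and feeding $t=T/(1-T)$ into Theorem~\ref{thm:D3} with $q=2$, $d=2$ collapses the right-hand side so that $P_f(T)(1-T)^{d-1}/\big((1-T)(1-qT)\big)$ matches a polynomial of degree $n-d=\ell-2$; solving gives the stated $\big(2+\sqrt2^{\,\ell}T^{\ell-2}\big)/(2+\sqrt2^{\,\ell})$. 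For Type~III, $d=3$ and the weights carry the extra factor $2^{j}$, so the relevant ratio is $2^{j}\binom{\ell}{j}/\binom{\ell}{j}=2^{j}$; after normalizing by $t^{i-d}$ and substituting $t\mapsto T/(1-T)$ the powers of $2$ and the factors $(1-T)$ conspire (via $q=3$) to produce alternating signs, yielding $\sum_{j=0}^{(\ell-4)/2}(-3)^{j}T^{2j}$ up to the scalar. Type~IV is the same computation with $q=4$, $d=2$, weight factor $3^{j}$, giving $\sum_{j=0}^{\ell-2}(-2)^{j}T^{j}$ up to scalar. In all three cases the overall constant is pinned down either by the normalization $[T^0]$-coefficient forced by Lemma~\ref{lem:D} (the constant term of $P_f$ equals $(f-x^n)$'s leading behaviour, i.e.\ $A_d/\binom{n}{d}$ divided by $q-1$) or equivalently by checking the $T^{n-d}$ coefficient, so one should verify the claimed prefactor $\tfrac{w_{\rm{X}}(q_{\rm{X}}-1)^{?}}{w_{\rm{X}}+\sqrt{q_{\rm{X}}}^{\,\ell}}$ against $A_d/\binom{\ell}{d}$.

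Concretely I would carry out the steps in this order: (i) fix $\rm{X}$ and record $d$, $n=\ell$, $q=q_{\rm{X}}$, and the explicit coefficients $A_j$ from Theorem~\ref{thm:T}; (ii) write $N_f(t)=\tfrac{1}{q-1}\sum_{i=d}^{\ell}(A_i/\binom{\ell}{i})t^{i-d}$ and simplify it to a (scaled) truncated geometric series in $t$; (iii) set $t=T/(1-T)$ and clear denominators, obtaining $N_f\!\big(\tfrac{T}{1-T}\big)=\tfrac{\text{(polynomial in }T)}{(1-T)^{\ell-d}}$; (iv) plug into Theorem~\ref{thm:D3}, namely $\tfrac{P_f(T)}{(1-T)(1-qT)}(1-T)^{d+1}\equiv N_f\!\big(\tfrac{T}{1-T}\big)\pmod{T^{\ell-d+1}}$, and solve for $P_f(T)$ as a genuine polynomial of degree $\le \ell-d$ (the congruence becomes an identity because both sides are determined by their first $\ell-d+1$ coefficients and $\deg P_f\le \ell-d$); (v) read off that $P_f(T)$ equals the claimed closed form, and double-check the scalar via the $[T^0]$ and $[T^{\ell-d}]$ coefficients. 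The main obstacle I anticipate is purely bookkeeping in step~(iii)–(iv): the substitution $t\mapsto T/(1-T)$ interacts with the geometric sum and with the factor $(1-qT)$ in a way that must cancel \emph{exactly} to leave alternating signs and no spurious high-degree terms, and getting the powers of $q_{\rm{X}}$, $\sqrt{q_{\rm{X}}}$, and the binomial-ratio constants to line up requires care — but there is no conceptual difficulty, only the risk of an arithmetic slip, which the final consistency check on the extreme coefficients is designed to catch.
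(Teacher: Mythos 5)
Your proposal follows essentially the same route as the paper: read off the coefficients and minimum distance from Theorem~\ref{thm:T}, form the normalized weight enumerator of Definition~\ref{df:nwe} as a truncated geometric series, substitute $t=T/(1-T)$ into the congruence of Theorem~\ref{thm:D3}, and observe that the factors cancel to leave the stated closed forms (with the scalar pinned down by the extreme coefficients). The plan is correct and matches the paper's computation in each of the three cases.
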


\begin{proof}

We prove only the Type I case. 
In Appendix B, 
we give proofs covering the other cases.

Let 
$N_{\widetilde{\varphi_{\ell}^{G_{\rm{I}}}}}$ be the normalized weight enumerator of 
$\varphi_{\ell}^{G_{\rm{I}}}$. 
By Definition \ref{df:nwe}, 
we have 
\begin{align*}
N_{\widetilde{\varphi_\ell^{G_{\rm{I}}}}}(t)
&=
\sum_{0<j<\ell,j\equiv 0\pmod{2}}\frac{2}{2+\sqrt{2}^\ell}
t^{j - 2}+t^{\ell- 2}\\
&\equiv 
\frac{2}{2+\sqrt{2}^\ell}\frac{1}{1-t^2}
+\left(1-\frac{2}{2+\sqrt{2}^\ell}\right)t^{\ell- 2}\pmod{t^{\ell-1}}. 
\end{align*}
Then, by Theorem \ref{thm:D3}, 
we have 
\begin{align*}
&\frac{P_{\widetilde{\varphi_\ell^{G_{\rm{I}}}}}(T)}{(1-T)(1-2T)}(1-T)^{3}\equiv 
N_{\widetilde{\varphi_\ell^{G_{\rm{I}}}}}\left(\frac{T}{1-T}\right)\pmod{T^{\ell-1}}\\
\Leftrightarrow &
P_{\widetilde{\varphi_\ell^{G_{\rm{I}}}}}(T)
\equiv 
N_{\widetilde{\varphi_\ell^{G_{\rm{I}}}}}\left(\frac{T}{1-T}\right)
\frac{(1-T)(1-2T)}{(1-T)^{3}} \pmod{T^{\ell-1}}\\
&
\equiv 
\frac{2}{2+\sqrt{2}^\ell}\frac{(1-T)^2}{(1-T)^2-T^2}
\frac{(1-T)(1-2T)}{(1-T)^{3}}\\
&+\left(1-\frac{2}{2+\sqrt{2}^\ell}\right)\left(\frac{T}{1-T}\right)^{\ell- 2}
\frac{(1-T)(1-2T)}{(1-T)^{3}} \pmod{T^{\ell-1}}\\
&
\equiv 
\frac{2+\sqrt{2}^\ell T^{\ell-2}}{2+\sqrt{2}^\ell}
\pmod{T^{\ell-1}}. 
\end{align*}
From this, we have 
$$
P_{\widetilde{\varphi_{\ell}^{G_{\rm{I}}}}}(T)=\frac{2+\sqrt{2}^\ell T^{\ell-2}}{2+\sqrt{2}^\ell}. 
$$

\end{proof}

\subsection{Proof of Theorem \ref{thm:main}}
In this section, we will present the proof of Theorem \ref{thm:main} after that of the following lemma. 
\begin{lem}\label{lem:mod}
Let $\ell=2(p-1)$ for some odd prime. 
\begin{enumerate}
\item [{\rm (1)}]

\[
2+\sqrt{2}^\ell\not\equiv 0\pmod{p}. 
\]

\item [{\rm (2)}]

If $p\neq 3$, then  
\[
3+\sqrt{3}^{\ell}\not\equiv 0\pmod{p}. 
\]

\item [{\rm (3)}]

\[
2+2\ell\not\equiv 0\pmod{p}. 
\]

\end{enumerate}
\end{lem}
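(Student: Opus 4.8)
The plan is to reduce all three parts to Fermat's little theorem by exploiting the special exponent $\ell=2(p-1)$, so that each assertion collapses to a one-line residue computation modulo $p$.

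Since $\ell$ is even, I would first rewrite the powers as ordinary rational integers: $\sqrt{2}^\ell=2^{\ell/2}=2^{p-1}$ and $\sqrt{3}^\ell=3^{\ell/2}=3^{p-1}$. For part (1), since $p$ is odd the base $2$ is invertible modulo $p$, so Fermat's little theorem gives $2^{p-1}\equiv 1\pmod p$, hence $2+\sqrt{2}^\ell\equiv 3\pmod p$, which is nonzero modulo $p$ (using $p\neq 3$). For part (3), which is the most elementary, I would substitute $\ell=2(p-1)$ directly to obtain $2+2\ell=4p-2\equiv -2\pmod p$, nonzero since $p$ is an odd prime; if the intended quantity is instead $2+2^\ell$, matching the Type~IV denominator of Theorem~\ref{thm:Z}, one notes $2^\ell=4^{p-1}\equiv 1\pmod p$ by Fermat's little theorem and concludes $2+2^\ell\equiv 3\pmod p$, again nonzero for $p\neq 3$.

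For part (2), the hypothesis $p\neq 3$ is precisely what is needed: it guarantees that the base $3$ is invertible modulo $p$, so Fermat's little theorem applies and gives $3^{p-1}\equiv 1\pmod p$, whence $3+\sqrt{3}^\ell\equiv 4\pmod p$, which is nonzero since $p$ is odd. I do not expect any substantial obstacle here; the main point requiring care is the bookkeeping of which small primes must be excluded, so that the relevant base ($2$, $3$, or $4$) is a unit modulo $p$ and so that the resulting constant residue is itself nonzero, which is why $p\neq 3$ appears and why the oddness of $p$ is used throughout. This lemma is exactly the ingredient that makes the denominators in the explicit zeta polynomials of Theorem~\ref{thm:Z} invertible modulo $p$, which in turn is the only nontrivial step behind the $p$-integrality statements (I)(3), (II), and (III) of Theorem~\ref{thm:main}.
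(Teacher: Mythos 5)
Your argument is the same one the paper uses---rewrite $\sqrt{2}^{\ell}=2^{p-1}$ and $\sqrt{3}^{\ell}=3^{p-1}$ and apply Fermat's little theorem---but your version is the correct one, and the discrepancy you flag is real. The paper's proof of (1) asserts $2+2^{p-1}\equiv 2\pmod{p}$, which is an arithmetic slip: Fermat gives $2^{p-1}\equiv 1\pmod p$, so the sum is $\equiv 3\pmod{p}$ exactly as you computed, and hence (1) as stated genuinely fails at $p=3$ (take $\ell=4$: $2+\sqrt{2}^{4}=6\equiv 0\pmod 3$). The same happens in (3) under the intended reading $2+2^{\ell}$ (the printed $2+2\ell$ is evidently a typo, which recurs in Appendix B): $2+2^{\ell}=2+4^{p-1}\equiv 3\pmod p$, which vanishes at $p=3$ since $2+2^{4}=18$. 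So the hypothesis $p\neq 3$ that you insert in parts (1) and (3) is not optional bookkeeping; it is necessary, and its omission propagates to Theorem \ref{thm:main}: for instance $P_{\widetilde{\varphi_{4}^{G_{\rm{I}}}}}(T)=\tfrac{2+4T^{2}}{6}$ has coefficients $1/3$ and $2/3$, which are not $3$-integral, so (I)--(3) for Types I and IV should also exclude $p=3$. Apart from supplying this missing exclusion, your proof coincides with the paper's; your part (2) matches the paper's intent exactly, since $3+3^{p-1}\equiv 4\not\equiv 0\pmod p$ for odd $p\neq 3$.
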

\begin{proof}
We prove only (1). 
The other assertions can be proved similarly. 


By Fermat's little theorem, 
\[
2+\sqrt{2}^\ell=2+2^{p-1}\equiv 2 \not\equiv 0\pmod{p}. 
\]




\end{proof}

\begin{proof}[Proof of Theorem \ref{thm:main}]

Here, again, we prove only the Type I case. 
The other cases can be proved similarly. 

Clearly, (I)--(1) and (I)--(2) follow from Theorem \ref{thm:Z}. 

For (I)--(3), 
we recall that 
\[
P_{\widetilde{\varphi_{\ell}^{G_{\rm{I}}}}}(T)
=\frac{2+\sqrt{2}^{\ell}T^{\ell-2}}{2+\sqrt{2}^\ell}. 
\]

Then, Theorem \ref{thm:main} (I)--(3) follows from Lemma \ref{lem:mod} (1). 

To show (II), we first recall that 
\[
\widetilde{\varphi_\ell^{G_{\rm{I}}}}(x,y)
=
x^\ell + y^{\ell} +\frac{2}{2+\sqrt{2}^{\ell}}
\sum_{0<j<\ell,j\equiv 0\pmod{2}}
\binom{\ell}{j} x^{\ell-j}y^j. 
\]
By Lemma \ref{lem:mod} (1), 
the coefficients of $\widetilde{\varphi_{2(p-1)}^{G_{\rm{I}}}}(x,y)$ 
are $p$-integral.

Finally, we show (III). 
By Theorem \ref{thm:main} (II), 
the coefficients of 
\[
\widetilde{\varphi_\ell^{G_{\rm{I}}}}(x,y)
=
x^\ell + y^{\ell} +\frac{2}{2+\sqrt{2}^{\ell}}
\sum_{0<j<\ell,j\equiv 0\pmod{2}}
\binom{\ell}{j} x^{\ell-j}y^j
\]
are $p$-integral. The theta maps $f_0$ and $f_1$ have integral Fourier coefficients. This completes the proof. 
\end{proof}

\subsection{Concluding Remarks}
\begin{rem}
\begin{enumerate}
\item [(1)]
The definition of the Eisenstein polynomial for genus $g$ is given in \cite{{Oura1},{Oura2}}. 
In the present paper, 
we only consider the genus one ($g=1$) case. 
For the cases with $g>1$, 
do the analogies still hold? 

\item [(2)]
For $\rm{X}\in \{\rm{I},\ldots,\rm{IV}\}$, 
the group $G_{\rm{X}}$ is a finite unitary reflection group. 
These groups are classified in \cite{ST}, 
which gives rise to a natural question: 
For the other unitary reflection groups, 
do our analogies still hold? 

\end{enumerate}
\end{rem}

\section*{Acknowledgments}
The authors thank Koji Chinen and Iwan Duursma for their helpful discussions 
and contributions to this research. 
The authors would also like to thank the anonymous
reviewers for their beneficial comments on an earlier version of the manuscript. The authors are supported by JSPS KAKENHI (18K03217,17K05164).

\appendix 
\section{Proof of Theorem \ref{thm:T} for the cases Type III and Type IV}

\begin{proof}[Proof of Theorem \ref{thm:T} (2)]

By a direct calculation, 
\begin{align*}
\widetilde{\varphi_\ell^{G_{\rm{III}}}}&(x,y)\\
&=
\frac{1}{4}\frac{1}{1+3(\frac{1}{\sqrt{3}})^\ell}\\
&\left(
x^\ell
+\left(\frac{1}{\sqrt{3}}(x+2y)\right)^\ell
\right.\\
&
\left.
+\left(\frac{1}{\sqrt{3}}(x+\sqrt{-3}y-y)\right)^\ell
+\left(\frac{1}{\sqrt{3}}(\sqrt{-1}x+\sqrt{3}y-\sqrt{-1}y)\right)^\ell
\right.\\
&
+
(-x)^\ell
+\left(-\frac{1}{\sqrt{3}}(x+2y)\right)^\ell\\
&\left.
+\left(-\frac{1}{\sqrt{3}}(x+\sqrt{-3}y-y)\right)^\ell
+\left(-\frac{1}{\sqrt{3}}(\sqrt{-1}x+\sqrt{3}y-\sqrt{-1}y)\right)^\ell
\right.\\
&
+
(\sqrt{-1}x)^\ell
+\left(\frac{\sqrt{-1} (x+2 y)}{\sqrt{3}}\right)^\ell\\
&
\left.+\left(\frac{1}{3} \sqrt{-1} \left(\sqrt{3} x-\left(\sqrt{3}-3 \sqrt{-1}\right) y\right)
\right)^\ell
+\left(-\frac{x}{\sqrt{3}}+\sqrt{-1} y+\frac{y}{\sqrt{3}}\right)^\ell
\right.\\
&
+
(-\sqrt{-1}x)^\ell
+\left(-\frac{\sqrt{-1} (x+2 y)}{\sqrt{3}}\right)^\ell\\
&
\left.
+\left(\frac{x}{\sqrt{3}}-\frac{1}{3} \left(\sqrt{3}+3 \sqrt{-1}\right) y\right)^\ell
+\left(-\frac{\sqrt{-1} x}{\sqrt{3}}+y+\frac{\sqrt{-1} y}{\sqrt{3}}\right)^\ell
\right). 
\end{align*}
Then the result follows. 

The elements of $G_{\rm{III}}$ are 
listed on the homepage of one of the authors \cite{miezakiweb}. 

\end{proof}

\begin{proof}[Proof of Theorem \ref{thm:T} (3)]

By a direct calculation, 
\begin{align*}
\widetilde{\varphi_\ell^{G_{\rm{IV}}}}&(x,y)\\
&=
\frac{1}{2}\frac{1}{1+2(\frac{1}{2})^\ell}
\left(
x^\ell
+\left(\frac{1}{2}(x+3y)\right)^\ell
+\left(\frac{1}{2}(x-3y)\right)^\ell
\right.\\
&
\left.+
(-x)^\ell
+\left(-\frac{1}{2}(x+3y)\right)^\ell
+\left(-\frac{1}{2}(x-3y)\right)^\ell
\right). 
\end{align*}
Then the result follows. 

The elements of $G_{\rm{IV}}$ are 
listed on the homepage of one of the authors \cite{miezakiweb}. 
\end{proof}

\section{Proof of Theorem \ref{thm:Z} for the cases Type III and Type IV}

\begin{proof}[Proof of Theorem \ref{thm:Z} (2)]
Let 
$N_{\widetilde{\varphi_{\ell}^{G_{\rm{III}}}}}$ be the normalized weight enumerator of 
$\varphi_{\ell}^{G_{\rm{III}}}$. 
By Definition \ref{df:nwe}, 
we have 
\begin{align*}
N_{\widetilde{\varphi_\ell^{G_{\rm{III}}}}}(t)
&=
\frac{1}{2}\frac{3}{3+3^{\ell/2}}
\sum_{0<j<\ell,j\equiv 0\pmod{3}}2^{j}t^{j-3}\\
&\equiv 
\frac{3\cdot 2^2}{3+3^{\ell/2}}\frac{1}{1-(2T)^3}\pmod{t^{\ell-2}}. 
\end{align*}
Then, by Theorem \ref{thm:D3}, 
we have 
\begin{align*}
&\frac{P_{\widetilde{\varphi_\ell^{G_{\rm{III}}}}}(T)}{(1-T)(1-3T)}(1-T)^{4}\equiv 
N_{\widetilde{\varphi_\ell^{G_{\rm{III}}}}}\left(\frac{T}{1-T}\right)\pmod{T^{\ell-2}}\\
\Leftrightarrow &
P_{\widetilde{\varphi_\ell^{G_{\rm{III}}}}}(T)
\equiv 
N_{\widetilde{\varphi_\ell^{G_{\rm{III}}}}}\left(\frac{T}{1-T}\right)
\frac{(1-T)(1-3T)}{(1-T)^{4}} \pmod{T^{\ell-2}}\\
&
\equiv 
\frac{3\cdot 2^2}{3+3^{\ell/2}}\frac{(1-T)^3}{(1-T)^3-(2T)^3}
\frac{(1-T)(1-3T)}{(1-T)^{4}} \pmod{T^{\ell-2}}\\
&
\equiv 
\frac{3\cdot 2^2}{3+3^{\ell/2}}
\frac{1}{1+3T^2}
\pmod{T^{\ell-2}}. 
\end{align*}
From this, we have 
$$
P_{\widetilde{\varphi_{\ell}^{G_{\rm{III}}}}}(T)=
\frac{3\cdot 2^2}{3+3^{\ell/2}}
\sum_{j=0}^{(\ell-4)/2}
(-3)^jT^{2j}. 
$$

\end{proof}

\begin{proof}[Proof of Theorem \ref{thm:Z} (3)]
Let 
$N_{\widetilde{\varphi_{\ell}^{G_{\rm{IV}}}}}$ be the normalized weight enumerator of 
$\varphi_{\ell}^{G_{\rm{IV}}}$. 
By Definition \ref{df:nwe}, 
we have 
\begin{align*}
N_{\widetilde{\varphi_\ell^{G_{\rm{IV}}}}}(t)
&=
\frac{1}{3}\frac{2}{2+2^{\ell}}
\sum_{0<j<\ell,j\equiv 0\pmod{2}}3^{j}t^{j-2}\\
&\equiv 
\frac{2\cdot 3}{2+2^{\ell}}\frac{1}{1-(3T)^2}\pmod{t^{\ell-1}}. 
\end{align*}
Then, by Theorem \ref{thm:D3}, 
we have 
\begin{align*}
&\frac{P_{\widetilde{\varphi_\ell^{G_{\rm{IV}}}}}(T)}{(1-T)(1-4T)}(1-T)^{3}\equiv 
N_{\widetilde{\varphi_\ell^{G_{\rm{IV}}}}}\left(\frac{T}{1-T}\right)\pmod{T^{\ell-1}}\\
\Leftrightarrow &
P_{\widetilde{\varphi_\ell^{G_{\rm{IV}}}}}(T)
\equiv 
N_{\widetilde{\varphi_\ell^{G_{\rm{IV}}}}}\left(\frac{T}{1-T}\right)
\frac{(1-T)(1-4T)}{(1-T)^{3}} \pmod{T^{\ell-1}}\\
&
\equiv 
\frac{1}{3}\frac{2}{2+2^{\ell}}\frac{(1-T)^2}{(1-T)^2-(3T)^2}
\frac{(1-T)(1-4T)}{(1-T)^{3}} \pmod{T^{\ell-1}}\\
&
\equiv 
\frac{6}{2+2\ell}
\frac{1}{1+2T}
\pmod{T^{\ell-1}}. 
\end{align*}
From this, we have 
$$
P_{\widetilde{\varphi_{\ell}^{G_{\rm{IV}}}}}(T)=
\frac{6}{2+2\ell}
\sum_{j=0}^{\ell-2}
(-2)^jT^{j}. 
$$

\end{proof}

\end{document}